\newtheorem{theorem}{Theorem}[section]
\newtheorem{lemma}[theorem]{Lemma}
\newtheorem{corollary}[theorem]{Corollary}
\theoremstyle{definition}
\newtheorem{example}[theorem]{Example}
\theoremstyle{remark}
\numberwithin{equation}{section}
\theoremstyle{remark}
\DeclareMathOperator{\lcm}{lcm}
\DeclareMathOperator{\num}{num}
\DeclareMathOperator{\lc}{lc}
\DeclareMathOperator{\lv}{lv}
\DeclareMathOperator{\init}{init}
\DeclareMathOperator{\pinit}{pinit}
\newcommand{\C}{\mathbb{C}}
\newcommand{\PF}{\mathcal{K}}
\newcommand{\sys}{\mathcal{S}}
\newcommand{\yy}{\tilde{y}\,}
\newcommand{\Res}{\mathrm{Res}}
\newcommand\para{\vspace*{2mm}}
\begin{document}

\date{\today}
\title{Algebraic, rational and Puiseux series solutions of systems of autonomous algebraic ODEs of dimension one}

\author{Jos\'{e} Cano}
\address{Dpto. Algebra, an\'alisis Matem\'atico, Geometr\'{\i}a y Topolog\'{\i}a, Universidad de Valladolid, Spain.}
\email{jcano@agt.uva.es}

\author{Sebastian Falkensteiner}
\address{Research Institute for Symbolic Computation (RISC), Johannes Kepler University Linz, Austria.}
\email{falkensteiner@risc.jku.at}

\author{J.Rafael Sendra}
\address{Research Group ASYNACS. Dpto. de F\'{\i}sica y Matem\'aticas, Universidad de Alcal\'a, Madrid, Spain.}
\email{rafael.sendra@uah.es}

\thanks{
First author partially supported by MTM2016-77642-C2-1-P (AEI/FEDER, UE). Second and third authors partially supported by FEDER/Ministerio de Ciencia, Innovaci\'{o}n y Universidades Agencia Estatal de Investigaci\'{o}n/MTM2017-88796-P (Symbolic Computation: new challenges in Algebra and Geometry together with its applications). 
Second author also supported by the Austrian Science Fund (FWF): P 31327-N32. 
Third author is a member of the Research Group ASYNACS (Ref.CT-CE2019/683).
}

\begin{abstract}
In this paper, we study the algebraic, rational and formal Puiseux series solutions of certain type of systems of autonomous ordinary differential equations. 
More precisely, we deal with systems which associated algebraic set is of dimension one. 
We establish a relationship between the solutions of the system and the solutions of an associated first order autonomous ordinary differential equation, 
that we call the reduced differential equation.
Using results on such equations, we prove the convergence of the formal Puiseux series solutions of the system, 
expanded around a finite point or at infinity, and we present an algorithm to describe them. 
In addition, we bound the degree of the possible algebraic and rational solutions, 
and we provide an algorithm to decide their existence and to compute such solutions if they exist.
Moreover, if the reduced differential equation is non trivial, 
for every given point $(x_0,y_0) \in \C^2$, we prove the existence of a convergent Puiseux
series solution $y(x)$ of the original system such that $y(x_0)=y_0$.
\end{abstract}

\maketitle

\noindent \textbf{keywords}
Algebraic autonomous ordinary differential equation,
formal Puiseux series solution,
algebraic solutions,
rational solutions,
convergent solution,
algebraic space curve.

\section{Introduction}

In~\cite{CanoFalkSendra}, we have studied local solutions of first
order autonomous algebraic ordinary differential equations. 
In this paper, we generalize the results obtained there to systems of higher 
order autonomous ordinary differential equations in one unknown function which associated
algebraic set is of dimension one, i.e. the algebraic set is a finite
union of curves and, maybe, points. 
In particular, we prove that every fractional power series solution of such systems is convergent, 
and an algorithm for computing these solutions is provided. 
Note that in~\cite{Denef1984} it is shown that for general
systems of algebraic ordinary differential equations the existence of
non-constant formal power series solutions can not be decided algorithmically.  
Nevertheless, in the case of systems as above, this undecidability
property does not hold.

Finding rational general solutions of such systems has been studied in~\cite{sendra2015rational}.
There, a necessary condition on the degree of the associated algebraic curve is provided.  
If the condition is fulfilled, the solutions are constructed from a
rational parametrization of a birational planar projection of the associated space curve.
Here, we provide an algorithm which decides the existence of not only rational but also algebraic solutions of such systems. 
Differently to the method described in ~\cite{sendra2015rational}, in the current approach we do not need to consider a rational parametrization of the associated curve.
We instead triangularize the given system and we derive from there a
single autonomous ordinary differential equation of first order with
the same non-constant formal Puiseux series solutions. 
We call it the reduced differential equation of the system. 
Since rational or algebraic functions are determined by their Puiseux
series expansion, the reduced differential equation has also the same
algebraic solutions that the original system. 
Furthermore, taking into account that the reduced equation is autonomous and of order one, we bound, 
using the results from~\cite{aroca2005algebraic}, the degree of its possible 
algebraic solutions, and hence of those of the original system.
Once the degree of the solutions is bounded, one may use the algorithm from section 4.3 in~\cite{aroca2005algebraic} to decide the existence and compute such solutions.

We derive the existence and convergence of
formal Puiseux series solutions of such systems (see Theorem~\ref{theorem-existence} and \ref{theorem-convergence}) 
from the corresponding results (see~\cite{CanoFalkSendra}) applied to the obtained reduced differential equation. 
With respect to the convergence of formal solutions a related result is given by Gerasimova and Razmyslov in~\cite{gerasimova2017nonaffine}. 
They show the convergence of formal power series solutions of a system of ordinary differential equations 
under some additional conditions such as that there are no zero-divisors of the differential algebra induced by the system and that the system is of transcendence degree one. 
Their method is based on the fact that the induced differential algebra is finitely generated as an algebra over its base field 
and then they reduce the problem to the Cauchy-Kowalevski theorem. 
Their method does not deal with fractional power series solutions.

In the literature there are several methods to triangularize 
differential systems and to obtain resolvent representations of them, 
see for instance~\cite{CluzeauHubert2003,Ovchinnikov2016} and references therein. 
The description of these methods are quite involved because they apply to general differential systems. 
This paper addresses only ordinary differential systems which 
associated algebraic set has dimension one and we can split the process into an algebraic triangularization part 
and then a straightforward differential elimination process. 
For the algebraic part we use regular chains as described in~\cite{kalk1993} and~\cite{wang2012elimination}.
This simple description of the process allow us
to have a precise relation between the formal Puiseux series solutions
of the original system and those of the reduced differential equation.

The structure of the paper is as follows. 
In Section~\ref{sec-pre} we recall some necessary concepts such as regular chains and regular zeros. 
Section~\ref{sec-systems} is devoted to derive from a system of autonomous ordinary differential equations of dimension one in one unknown function, a finite union of such regular chains. 
From them we derive a single autonomous ordinary differential equation of order one with the same non-constant formal Puiseux series solutions as the original system. 
Using this reduction, the main results in~\cite{CanoFalkSendra} can be generalized to these particular systems (see Theorems~\ref{theorem-convergence} and~\ref{theorem-existence}). 
In Section~\ref{sec-algo} we present an algorithm for this reduction and, using the algorithms in~\cite{aroca2005algebraic} and \cite{CanoFalkSendra}, all formal Puiseux series and algebraic solutions of the original system can be found as we illustrate by examples.

\section{Preliminaries} \label{sec-pre}

We recall the notion of regular chains and regular zeros; for further details we refer to~\cite{kalk1993} and~\cite{wang2012elimination}. 
Let us denote for $f,g \in \C[y_0,\ldots,y_m]$ by $\lv(f)$ the leading variable, by $\lc(f)$ the leading coefficient and by $\init(f)$ the initial of $f$ with respect to the ordering $y_0<\cdots<y_m$. 
In addition, we denote by $\Res_{y_i}(f,g)$ the resultant of $f$ and $g$ with respect to $y_i$. 
Let $\sys=\{F_1,\ldots,F_M\} \subset \C[y_0,\ldots,y_m]$ be a finite system of polynomials in triangular form, i.e. $\lv(F_i)<\lv(F_j)$ for any $1 \leq i<j \leq M$. 
Then we define $\Res(f,\sys)$ as the resultant of $f$ and consecutively $F_M,\ldots,F_1$ with respect to their leading variables, 
i.e. $$\Res(f,\sys)=\Res_{\lv(F_1)}(\cdots \Res_{\lv(F_M)}(f,F_M),\cdots,F_1).$$
Moreover, we define $\init(\sys)=\{\init(F_j)~|~1 \leq j \leq M\}$ and $\pinit(\sys)=\prod_{j=1}^M \init(F_j)$. 
A \textit{regular chain} is a system of algebraic equations $\sys$ in triangular form with the additional property that $\Res(f,\sys) \neq 0$ for any $f \in \init(\sys)$.

Let $K \supseteq \C$ be a field and $\sys \subset \C[y_0,\ldots,y_m]$. 
Then let us denote $$\mathbb{V}_K(\sys)=\{a \in K^{m+1}~|~f(a)=0 \text{ for all } f \in \sys \}.$$
For a regular chain $\sys$, we define a \textit{regular zero} of $\sys$ as an element $a=(a_0,\ldots,a_m) \in \mathbb{V}_{K}(\sys)$ 
such that for $(\sys \cap \C[y_0]) \setminus \C = \emptyset$ the component $a_0$ is transcendental over $\C$ 
and for $1 \leq k \leq m$ with $(\sys \cap \C[y_0,\ldots,y_k]) \setminus \C[y_0,\ldots,y_{k-1}] = \emptyset$ 
the component $a_k$ is transcendental over $\C(a_0,\ldots,a_{k-1})$.

We recall a well-known theorem for the relation between regular chains and regular zeros, see~\cite{wang2012elimination}[Proposition 5.1.5, Corollary 5.1.6].
\begin{theorem} \label{thm-regEquivalent}
Let $\sys=\{F_1,\ldots,F_M\} \subset \C[y_0,\ldots,y_m]$ be a finite system of polynomials in triangular form and denote by $\sys_k$ the first $k$ polynomials of $\sys$. 
Then the following are equivalent:
\begin{enumerate}
	\item $\sys$ is a regular chain.
	\item $|\sys|=1$ or for any $k=2,\ldots,M$ the subsystem $\sys_{k-1}$ is a regular chain and for any regular zero $a$ of $\sys_{k-1}$ and $f \in \sys_{k}$ it holds that $\init(f)(a) \neq 0.$
\end{enumerate}
\end{theorem}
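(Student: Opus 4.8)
The plan is to argue by induction on the number $M$ of polynomials in $\sys$. The base case $M=1$ is immediate: a single polynomial $F_1$ (necessarily non-constant, so that $\lv(F_1)$ is defined) is always a regular chain, since $\init(F_1)$ does not involve $\lv(F_1)$ and hence $\Res(\init(F_1),\{F_1\})=\Res_{\lv(F_1)}(\init(F_1),F_1)=\init(F_1)^{\deg_{\lv(F_1)}(F_1)}\neq 0$, while statement (2) holds through its disjunct $|\sys|=1$. So assume $M\geq 2$ and that the theorem is known for all triangular systems with fewer than $M$ polynomials.

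Second, I would carry out a purely formal reduction of the iterated resultant. Because $\sys$ is triangular we have $\lv(F_1)<\cdots<\lv(F_M)$, and $\init(F_j)$ involves no variable $\geq\lv(F_j)$. Using multiplicativity of the resultant in each argument together with the identity $\Res_y(c,h)=c^{\deg_y(h)}$, valid whenever $c$ does not contain $y$, the resultants against $F_M,\dots,F_j$ contribute only powers, and one obtains
\[
\Res(\init(F_j),\sys)=\Res(\init(F_j),\sys_{j-1})^{N_j}
\]
for some integer $N_j\geq 1$; the same identity applied inside $\sys_{M-1}$ then shows that $\sys$ is a regular chain if and only if $\sys_{M-1}$ is a regular chain and, in addition, $\Res(\init(F_M),\sys_{M-1})\neq 0$. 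This isolates the single new condition that $F_M$ contributes.

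Third --- and this is the heart --- I would prove that, assuming $\sys_{M-1}$ is a regular chain,
\[
\Res(\init(F_M),\sys_{M-1})\neq 0 \iff \init(F_M)(a)\neq 0 \text{ for every regular zero } a \text{ of } \sys_{M-1}.
\]
The point is that $\Res(\init(F_M),\sys_{M-1})$ is a polynomial in the free variables of $\sys_{M-1}$ alone, and in a regular zero those components are algebraically independent over $\C$, so this polynomial vanishes at one --- equivalently, at every --- regular zero precisely when it is the zero polynomial. For the forward implication I would use the auxiliary fact, provable directly from the definition of a regular chain by a descending specialization argument, that at any regular zero $a$ of $\sys_{M-1}$ all the initials $\init(F_i)$, $i<M$, are nonzero; consequently no degree drops occur when the tower $\Res_{\lv(F_1)}(\cdots\Res_{\lv(F_{M-1})}(\init(F_M),F_{M-1})\cdots,F_1)$ is specialized at $a$, so if $\init(F_M)(a)=0$ the iterated resultant must vanish at $a$, hence identically. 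For the converse I would run the construction of a regular zero in reverse: if the iterated resultant is identically zero, then, descending level by level through $\Res_{\lv(F_i)}(\,\cdot\,,F_i)$ and using at each level that $\init(F_i)$ stays nonzero (so that $F_i$ keeps its degree and a common root of the specialized polynomials can be chosen), one produces a regular zero $a$ of $\sys_{M-1}$ at which $\init(F_M)$ vanishes. I expect the bookkeeping in this converse direction --- tracking how the iterated resultant and the degrees behave under specialization, and certifying that the vanishing is witnessed by a genuine regular zero rather than by a spurious degree collapse --- to be the main obstacle; it is exactly where the regular-chain hypothesis on $\sys_{M-1}$ is indispensable, and it is the content of Proposition 5.1.5 and Corollary 5.1.6 of~\cite{wang2012elimination}, which one may also simply invoke.

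Finally I would assemble the pieces: by the induction hypothesis, ``$\sys_{M-1}$ is a regular chain'' unfolds into exactly the clauses of (2) for $k=2,\dots,M-1$, and the equivalence just established supplies the clause for $k=M$; since $M\geq 2$ the disjunct $|\sys|=1$ is irrelevant. The one remaining point is that (2) requires $\init(f)(a)\neq 0$ for all $f\in\sys_k$ and not merely for $f=F_k$; for $f=F_i$ with $i<k$ this follows from the easy direction, since $\Res(\init(F_i),\sys_{k-1})\neq 0$ (as $\sys_{k-1}$ is a regular chain) and, being a polynomial in the free variables, it does not vanish at the regular zero $a$, whence $\init(F_i)(a)\neq 0$.
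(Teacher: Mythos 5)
The paper does not actually prove this theorem: it is stated as a recalled result and delegated entirely to \cite{wang2012elimination} (Proposition 5.1.5 and Corollary 5.1.6), with the added remark that condition (2) is precisely Kalkbrener's definition of a regular chain. Your proposal therefore supplies more than the paper does, and its outline is the standard argument and is correct: the base case; the collapse of the iterated resultant to $\Res(\init(F_j),\sys_{j-1})^{N_j}$ via multiplicativity and $\Res_y(c,h)=c^{\deg_y(h)}$, which isolates the single new condition $\Res(\init(F_M),\sys_{M-1})\neq 0$; and the translation of that condition into nonvanishing of $\init(F_M)$ at regular zeros, using that the iterated resultant is a polynomial in the free variables only and that the free components of a regular zero are algebraically independent over $\C$. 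Two minor observations. In the forward direction you do not actually need to exclude degree drops: the specialized Sylvester determinant vanishes as soon as the two specialized polynomials share a root, irrespective of whether nominal degrees are attained, so the nonvanishing of the initials $\init(F_i)(a)$ (available from the induction hypothesis) is only needed to propagate the vanishing correctly from one level to the next. The converse --- that an identically vanishing iterated resultant is witnessed by a genuine regular zero annihilating $\init(F_M)$ --- is indeed the real content; your level-by-level construction (at each stage either the partially specialized intermediate resultant is identically zero in $\lv(F_k)$, in which case any root of the specialized $F_k$ serves, or the vanishing resultant together with $\init(F_k)\neq 0$ forces a common root) does go through, and in any case you ultimately invoke exactly the reference the paper itself relies on, so nothing is lost relative to the paper.
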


In fact, statement (2) above is used in~\cite{kalk1993} as definition of regular chains. 
Note that in~\cite{yang1990} regular chains are called ''proper ascending chains``, but are defined exactly as in this paper.

Regular chains can be helpful in order to represent algebraic sets as Theorem 5.2.2 in~\cite{wang2012elimination} shows:
\begin{theorem} \label{thm-union}
Let $\sys \subseteq \C[y_0,\ldots,y_m]$ be a polynomial system. 
Then there exists a finite set of regular chains $\sys_1,\ldots,\sys_N \subseteq \C[y_0,\ldots,y_m]$ such that 
\begin{equation} \label{eq-union}
\mathbb{V}_{K}(\sys)= \bigcup_{j=1}^N \mathbb{V}_{K}(\sys_j) \setminus \mathbb{V}_{K}(\pinit(\sys_j)),
\end{equation}
\end{theorem}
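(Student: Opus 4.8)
The plan is to prove Theorem~\ref{thm-union} by induction on the dimension of the algebraic set $\mathbb{V}_{\overline{K}}(\sys)$, using the Zariski closure and working over an algebraically closed field first, then noting that a regular-chain decomposition valid over $\overline{\C}$ remains valid over any extension $K$ because the defining identity~\eqref{eq-union} is purely algebraic. First I would reduce to the irreducible case: write $\mathbb{V}(\sys)$ as a finite union of its irreducible components $C_1,\ldots,C_r$ and treat each $C_i$ separately, since a finite union of finite unions of regular-chain strata is again such a union. For a single irreducible variety $C$ of dimension $d$, I would choose coordinates (a suitable permutation, but here the variable order $y_0<\cdots<y_m$ is fixed, so one must argue this works for \emph{generic} enough $C$ and otherwise fall back on projection) so that the projection of $C$ onto $\C[y_0,\ldots,y_{d-1}]$ is dominant and $C$ is, off a proper closed subset, the graph of algebraic functions expressing $y_d,\ldots,y_m$ over the field $\C(y_0,\ldots,y_{d-1})$.

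The heart of the construction is then a triangularization step. Over the field $F=\C(C)$, or equivalently using pseudo-division, I would produce a triangular set $\mathcal{T}=\{G_d,\ldots,G_m\}$ with $\lv(G_k)=y_k$ such that $G_k$ is, up to its initial, the minimal polynomial of $y_k$ over $\C(y_0,\ldots,y_{k-1})$ on $C$, together with polynomials capturing the transcendence part (there are none in the first $d$ variables since $y_0,\ldots,y_{d-1}$ are a transcendence basis). The key claim to verify is that this triangular set is in fact a \emph{regular chain} in the sense defined before Theorem~\ref{thm-regEquivalent}: for this I would invoke the equivalence in that theorem, checking inductively that for every regular zero $a$ of the truncation $\mathcal{T}_{k-1}$ one has $\init(G_k)(a)\neq 0$. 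This holds because a regular zero of $\mathcal{T}_{k-1}$ specializes the generic point of the $d$-dimensional locus faithfully, and the initial of $G_k$ is a nonzero element of $\C(C)$ hence does not vanish at the generic point; transcendence of the free coordinates is exactly what guarantees no spurious specialization can kill it.

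The bookkeeping of the "bad locus" is the next step: the generic description of $C$ as a graph fails precisely on $\mathbb{V}(\pinit(\mathcal{T}))\cap C$, which is where the inductive identity \eqref{eq-union} gets its correction term $\setminus\mathbb{V}_K(\pinit(\sys_j))$. Concretely, $\mathbb{V}(\mathcal{T})\setminus\mathbb{V}(\pinit(\mathcal{T}))$ is contained in $C$ and contains a dense open subset of $C$; the leftover $C\cap\mathbb{V}(\pinit(\mathcal{T}))$ is a proper closed subset, hence of strictly smaller dimension, to which I apply the induction hypothesis to obtain further regular chains. Taking the union over all components $C_i$ and over all the recursively produced pieces yields the finite family $\sys_1,\ldots,\sys_N$ with the stated property. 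Finally I would note that the whole argument is performed over $\overline{\C}$ but each $\sys_j$ lies in $\C[y_0,\ldots,y_m]$ and the set-theoretic identity, being equivalent to an ideal-membership/radical statement over $\C$, extends verbatim to any field $K\supseteq\C$; this is the point where one must be a little careful, so I would either cite the precise statement of Theorem~5.2.2 in~\cite{wang2012elimination} or spell out that both sides of \eqref{eq-union} commute with the base change $\overline{\C}\subseteq K$.

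The main obstacle I expect is the fixed-variable-order constraint: the clean "graph of algebraic functions over a transcendence basis" picture requires that $y_0,\ldots,y_{d-1}$ (for the right $d$) actually form a transcendence basis of $\C(C)$, which need not happen for the given ordering and a given component. The honest fix — and what the cited reference does — is to not insist on the graph picture but to run a genuine triangular decomposition algorithm (pseudo-division producing remainders, splitting on whether initials vanish), which automatically handles components where the projection to an initial segment of variables is not dominant by introducing the splitting sets $\mathbb{V}(\pinit(\sys_j))$. So the real content to get right is the termination of that splitting (dimension drops, so the recursion is finite) and the regular-chain property of each leaf, both of which follow from Theorem~\ref{thm-regEquivalent} applied inductively.
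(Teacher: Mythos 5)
The paper offers no proof of this statement: it is imported verbatim as Theorem~5.2.2 of \cite{wang2012elimination} (in the tradition of Ritt--Wu characteristic sets and Kalkbrener's decomposition), so there is no internal argument to compare yours against. Your sketch is a faithful reconstruction of the standard proof in that literature: decompose $\mathbb{V}_{\C}(\sys)$ into irreducible components, take a characteristic set $\mathcal{T}$ of each prime ideal $I(C)$, note that $\mathbb{V}_{\C}(\mathcal{T})\setminus\mathbb{V}_{\C}(\pinit(\mathcal{T}))$ agrees with $C$ outside the proper closed subset $C\cap\mathbb{V}_{\C}(\pinit(\mathcal{T}))$, and conclude by Noetherian induction on dimension. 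Three remarks. (i) Your worry about the fixed variable order is a non-issue: a characteristic set of a prime ideal exists for any fixed order; the free (transcendental) variables are simply those $y_k$ for which $I(C)$ contributes no new polynomial, and they need not form an initial segment $y_0,\dots,y_{d-1}$, so the ``graph over an initial segment'' picture can be dropped entirely rather than repaired. (ii) The step you leave vaguest is the one with real content, namely that a characteristic set of a prime ideal is a regular chain in the resultant-based sense of Section~2: the clean argument is that, by induction, $\mathrm{sat}(\mathcal{T}_{k-1})=I(C)\cap\C[y_0,\dots,y_{k-1}]$ is prime, a regular zero of $\mathcal{T}_{k-1}$ is a generic zero of this prime (a prime containing it and having the same transcendence degree must equal it), and $\init(G_k)$ lies outside it; this is exactly condition (2) of Theorem~\ref{thm-regEquivalent}. (iii) For the base change from $\C$ to an arbitrary $K\supseteq\C$, the pseudo-division identities $\pinit(\mathcal{T})^{e}f=\sum q_iG_i$ for $f\in I(C)$, being polynomial identities, yield both inclusions of \eqref{eq-union} over every $K$ directly; this is simpler and safer than the radical/model-theoretic transfer you invoke. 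Finally, if one falls back on the splitting algorithm rather than the irreducible decomposition, termination is standardly proved via the Ritt ordering on ascending sets, not by dimension drop alone, since the branch where an initial is set to zero need not lower the dimension immediately.
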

We note that, in the notation of~\cite{wang2012elimination}, $\mathbb{V}_{K}(\sys_j / \init(\sys_j))=\mathbb{V}_{K}(\sys_j) \setminus \mathbb{V}_{K}(\pinit(\sys_j))$, as it is also mentioned in chapter 1.5 therein. 
Let us recall that $\mathbb{V}_{\C}(\sys_j)$ is an algebraic set of dimension $m-|\sys_j|$.

There are several implementations for performing computations with regular chains 
and in particular computing regular chain decomposition as in~\eqref{eq-union} such as in the \texttt{Maple}-package \texttt{RegularChains}.

Let $\PF$ be the field of formal Puiseux series expanded around any $x_0 \in \C_{\infty}$, where $\C_{\infty}=\C \cup \{\infty\}$.
We are interested in non-constant formal Puiseux series solutions 
$y(x) \in \PF$ such that $y(x_0)=y_0 \in \C_{\infty}$.
Since the systems we are dealing with, see~\eqref{EQ-AODESystem} below, are invariant under the translation of the independent variable, 
we can assume without loss of generality that the formal Puiseux series is expanded around zero or at infinity such as in~\cite{CanoFalkSendra}.
For any subset 
$\widetilde{\sys} \subseteq \C[y,y',\ldots,y^{(m)}]$ let us denote
$$\mathbb{V}_{{\PF}}(\widetilde{\sys})=\{(a_0,a_1,\ldots,a_m) \in 
{\PF}^{m+1} ~|~ F(a_0,a_1,\ldots,a_m)=0 \text{ for all } F\in \widetilde{\sys} \}.$$

\section{Systems of algebro-geometric dimension one} \label{sec-systems}

Let us consider systems of differential equations of the form
\begin{equation} \label{EQ-AODESystem}
\widetilde{\sys}=\{F_j(y,y',\ldots,y^{(m)})=0\}_{1 \leq j \leq M},
\end{equation}
where $F_1, \ldots, F_M \in \C[y,y',\ldots,y^{(m)}]$ with $m>0$. 
For a field $K \supseteq \C$, by considering $y$ and its derivatives as independent variables, 
we write $\mathbb{V}_{K}(\widetilde{\sys})$ for the algebraic set generated by $\widetilde{\sys}$. 
We assume the dimension of $\mathbb{V}_{\C}(\widetilde{\sys})$ to be one, 
i.e. $\mathbb{V}_{\C}(\widetilde{\sys})$ is a finite union of curves and, maybe, a finite union of points. 
Note that a single AODE of order one can be seen as a system of the type~\eqref{EQ-AODESystem} with $M=m=1$ and is of dimension one.

\begin{lemma} \label{lem:TriangularizedSystem}
For every $\widetilde{\sys}$ as in~\eqref{EQ-AODESystem} we can compute a finite union of regular chains $\sys$ as in~\eqref{EQ-TriangularizedSystem} with the same non-constant formal Puiseux series solutions.
\end{lemma}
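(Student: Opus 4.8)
The plan is to apply the regular‑chain decomposition of Theorem~\ref{thm-union} to the purely algebraic set $\mathbb{V}_{\C}(\widetilde{\sys}) \subseteq \C^{m+1}$ obtained by treating $y, y', \ldots, y^{(m)}$ as independent variables $y_0, \ldots, y_m$, and then to argue that, from the point of view of formal Puiseux series solutions of the differential system, passing to this decomposition loses nothing. First I would invoke Theorem~\ref{thm-union} to obtain regular chains $\sys_1, \ldots, \sys_N$ with $\mathbb{V}_K(\widetilde{\sys}) = \bigcup_{j=1}^N \mathbb{V}_K(\sys_j) \setminus \mathbb{V}_K(\pinit(\sys_j))$, valid over any field $K \supseteq \C$; in particular I apply it with $K = \PF$. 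Since $\dim \mathbb{V}_{\C}(\widetilde{\sys}) = 1$, each component of positive dimension has $|\sys_j| = m$, so each such $\sys_j$ is a triangular system of exactly $m$ polynomials $\{G_{j,0}, \ldots, G_{j,m-1}\}$ with leading variables $y_0 < \cdots < y_{m-1}$ (the components with $|\sys_j| = m+1$ are isolated points and contribute only constant solutions, which can be treated separately or discarded if we restrict to non‑constant ones); reinterpreting $y_i$ as $y^{(i)}$ turns $\sys$ into a differential system of the desired triangular shape, which is what the referenced equation~\eqref{EQ-TriangularizedSystem} should display.

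Next I would verify the claim about solutions in both directions. If $\phi(x) \in \PF$ is a non‑constant Puiseux series solution of $\widetilde{\sys}$, then the vector $(\phi, \phi', \ldots, \phi^{(m)}) \in \PF^{m+1}$ lies in $\mathbb{V}_{\PF}(\widetilde{\sys})$, hence by~\eqref{eq-union} it lies in $\mathbb{V}_{\PF}(\sys_j) \setminus \mathbb{V}_{\PF}(\pinit(\sys_j))$ for some $j$; in particular it is a zero of $\sys_j$, so $\phi$ solves the $j$‑th regular chain read differentially. Conversely, suppose $\phi$ is a non‑constant Puiseux series with $(\phi, \ldots, \phi^{(m)})$ a zero of some $\sys_j$. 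Here is where the initials must be handled: a priori such a $\phi$ need not satisfy $\pinit(\sys_j)(\phi,\ldots,\phi^{(m)}) \neq 0$, so it need not lie in $\mathbb{V}_{\PF}(\sys_j) \setminus \mathbb{V}_{\PF}(\pinit(\sys_j))$ and hence need not be an honest solution of $\widetilde{\sys}$. The way around this is the resultant/consequence relation built into Theorem~\ref{thm-union}: every $F \in \widetilde{\sys}$ satisfies $\pinit(\sys_j)^{e} \cdot F \in \sqrt{(\sys_j)}$ for suitable $e$ (this is exactly what the set‑theoretic identity~\eqref{eq-union} encodes, via the saturation $(\sys_j / \init(\sys_j))$). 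Thus on any zero of $\sys_j$ at which some initial vanishes, we are on the boundary locus $\mathbb{V}(\pinit(\sys_j))$, which is cut out by $\Res(\pinit(\sys_j), \sys_j)$‑type conditions of strictly lower differential transcendence degree; I would argue these boundary pieces are themselves of dimension zero (constant solutions) by the regular‑chain property $\Res(f, \sys_j) \neq 0$ for $f \in \init(\sys_j)$, so no \emph{non‑constant} Puiseux series solution is gained or lost. Combining the two directions gives that the finite union $\sys = \bigcup_j \sys_j$ (read differentially) has exactly the same non‑constant formal Puiseux series solutions as $\widetilde{\sys}$.

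The main obstacle is precisely this bookkeeping with the initials: the decomposition~\eqref{eq-union} is only set‑theoretic and involves removing $\mathbb{V}_K(\pinit(\sys_j))$, so one must be careful that the removed locus cannot carry non‑constant Puiseux series solutions of the system — otherwise the two sides would differ exactly on such solutions. I expect this to follow cleanly from the regular‑chain condition $\Res(f,\sys_j)\neq 0$ together with the dimension hypothesis $\dim \mathbb{V}_{\C}(\widetilde{\sys}) = 1$: vanishing of an initial on $\mathbb{V}(\sys_j)$ forces a drop to dimension zero, i.e. to finitely many points, whose only Puiseux series ''solutions`` are constants. A secondary, more routine point is that all of this is stable under the base‑field extension from $\C$ to $\PF$, which is legitimate because Theorem~\ref{thm-union} is asserted for arbitrary $K \supseteq \C$ and the regular‑chain property is a property of the polynomials, not of $K$. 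Finally, everything here is effective: the decomposition in~\eqref{eq-union} is computable (e.g. via the \texttt{RegularChains} package), which yields the ''we can compute`` part of the statement and sets up the algorithm of Section~\ref{sec-algo}.
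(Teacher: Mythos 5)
Your overall strategy coincides with the paper's: decompose $\mathbb{V}(\widetilde{\sys})$ into regular chains via Theorem~\ref{thm-union}, keep only the one-dimensional chains, and argue that removing $\mathbb{V}_K(\pinit(\sys_j))$ costs no non-constant Puiseux series solutions. But the step you yourself flag as the main obstacle is where your argument fails. You propose to show that $\mathbb{V}(\sys_j)\cap\mathbb{V}(\pinit(\sys_j))$ has dimension zero and hence carries only constant solutions. That dimension claim is false in general: for the regular chain $\sys_j=\{(y')^2-y,\; y\,y''-y'\}$ with ordering $y<y'<y''$ one has $\init(G_2)=y$ and $\Res(y,\sys_j)=y^2\neq 0$, yet $\mathbb{V}(\sys_j)\cap\mathbb{V}(\pinit(\sys_j))$ contains the entire line $\{(0,0,t): t\in\C\}$ and is one-dimensional. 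The correct mechanism --- the one the paper uses --- is differential/transcendental, not dimensional: a non-constant Puiseux series $y(x)$ is transcendental over $\C$, and since every level $k\geq 1$ of the chain carries a polynomial $G_k$, the tuple $(y(x),\ldots,y^{(m)}(x))$ is by definition a \emph{regular zero} of $\sys_j$. Theorem~\ref{thm-regEquivalent}(2) then forces $\init(G_k)(y(x),\ldots,y^{(k-1)}(x))\neq 0$ for $k\geq 2$, while $\init(G_1)\in\C[y]$ cannot vanish at a transcendental $y(x)$; hence the tuple automatically avoids $\mathbb{V}(\pinit(\sys_j))$. Your saturation identity $\pinit(\sys_j)\cdot F\in\sqrt{(\sys_j)}$ is a legitimate way to set up the converse direction, but it reduces to exactly this non-vanishing claim, which your dimension argument does not deliver.

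A secondary slip: you assign the one-dimensional chains the leading variables $y_0<\cdots<y_{m-1}$. That configuration contains an equation in $\C[y_0]=\C[y]$ alone, which forces $y$ to be constant and is precisely the type of chain the paper discards; the chains relevant for the form~\eqref{EQ-TriangularizedSystem} are those with leading variables $y',\ldots,y^{(m)}$ and $y$ as the free variable. You should state explicitly that one-dimensional chains whose first polynomial lies in $\C[y]$ are discarded for the same reason as the zero-dimensional ones.
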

\begin{proof}
Let us choose the ordering $y<y'<\cdots<y^{(m)}$. 
By Theorem~\ref{thm-union} there is a regular chain decomposition $\sys_1,\ldots,\sys_N$ 
such that every regular chain has a zeroset of dimension zero or one. 
We omit systems of regular chains starting with an algebraic equation in $y$, since they only lead to constant solutions. 
Thus, the remaining systems are of dimension one and of the form
\begin{equation}\label{EQ-TriangularizedSystem}
\sys=\left\{ \begin{array}{ll}
G_1(y,y')=\sum_{j=0}^{r_1} G_{1,j}(y) \cdot (y')^j=0 \\
G_2(y,y',y'')=\sum_{j=0}^{r_2} G_{2,j}(y,y') \cdot (y'')^j=0 \\
\quad \quad \vdots \\
G_m(y,\ldots,y^{(m)})=\sum_{j=0}^{r_m} G_{m,j}(y,\ldots,y^{(m-1)}) \cdot (y^{(m)})^j=0
\end{array} \right.
\end{equation}
with $r_j \geq 1$ and $\init(G_j)=G_{j,r_j} \neq 0$ for every $1 \leq j \leq m$.

Now we want to study in~\eqref{eq-union} which kind of solutions might
be a solution of a regular chain but not of the original system,
i.e. the solutions of $\sys$ and $\pinit(\sys)=\init(G_1) \cdots \init(G_m)=0$. 
If $y(x)$ is a non-constant formal Puiseux series solution of a $\sys_j$, 
then $(y(x),y'(x),\ldots,y^{(m)}(x))$ is a regular zero of $\sys_j$, 
because $y(x)$ is transcendental over $\C$ 
and for every $1 \leq k \leq m$ we have $$(\sys_j \cap \C[y,\ldots,y^{(k)}]) \setminus \C[y,\ldots,y^{(k-1)}]=G_k \neq \emptyset.$$
Then, by Theorem~\ref{thm-regEquivalent}, $$\init(G_2)(y(x),y'(x)),\ldots,\init(G_m)(y(x),\ldots,y^{(m-1)}(x)) \neq 0.$$ 
Since $\init(G_1)(y)=0$ is an algebraic equation in $y$, 
there can only be constant common zeros of $\sys_j$ and $\pinit(\sys_j)$.
\end{proof}

System~\eqref{EQ-TriangularizedSystem} could be further decomposed into systems with the factors of $G_1$ as initial equations.
However, for our purposes it is sufficient that $G_1$ and its separant, namely $\frac{\partial\, G_1}{\partial u_1}$, have no common differential solutions, i.e. if $G_1(y(x),y'(x))=0$ for $y(x) \in \PF$ then $\frac{\partial\, G_1}{\partial u_1}(y(x),y'(x)) \neq 0$.
To ensure this we consider $G_1 \in \C[u_0,u_1]$ to be square-free and with no factor in $\C[u_0]$ or $\C[u_1]$; compare with the hypotheses in~\cite{CanoFalkSendra}.

Moreover, we can assume without loss of generality for every solution $y(x) \in \PF$ 
of a system $\sys$ as in~\eqref{EQ-TriangularizedSystem} that $y(0)=y_0\in \C$.
Otherwise, if $y_0=\infty$, consider the change of variable $y=1/\yy$.
Let $G_j^*(\yy,\yy',\ldots,\yy^{(j)})$ be the numerator of
$G_j(1/\yy,(1/\yy)',\ldots,(1/\yy)^{(j)})$, and let $\sys^*$ be the
system $\{ G_j^*=0\}_{1\leq j \leq m}$.
In this situation, if $y(x) \in \PF$ is a solution of $\sys$ such that $y(x_0)=\infty$, then $\yy(x)=1/y(x)$ is a formal Puiseux series solution of $\sys^*$ with $\yy(x_0)\in \C$.
Moreover, for $j>0$, the $j$-th derivative of $\yy$ can be written as
\[ \yy^{(j)}=\dfrac{-y^{j-1}\,y^{(j)}+P_j(y,\ldots,y^{(j-1)})}{y^{j+1}} \]
where $P_j\in \C[u_0,\ldots,u_{j-1}]$.
In this situation, we consider the rational map
$$\Phi:\mathbb{C}^{m+1}\setminus \mathbb{V}(u_0) \rightarrow \mathbb{C}^{m+1} \setminus \mathbb{V}(w_0); (u_0,\ldots,u_m)\mapsto (w_0,\ldots,w_m),$$ where $w_0=1/u_0$ and
\[ w_j= \dfrac{-u_{0}^{j-1}\,u_j+P_j(u_0,\ldots,u_{j-1})}{u_{0}^{j+1}}. \]
Since the equality above is linear in $u_j$, $\Phi$ is birational.
In addition, taking into account that $u_0$ is not a factor of $G_1(u_0,u_1)$, one has that the Zariski closure of $\Phi(\mathbb{V}_{\C}(\sys))$ is $\mathbb{V}_{\C}(\sys^*)$.
Since $\dim(\mathbb{V}_{\C}(\sys))=1$, also $\dim(\mathbb{V}_{\C}(\sys^*))=1$ and one may proceed with $\sys^*$ instead of $\sys$.

\para

For a given system $\sys$ as in~\eqref{EQ-TriangularizedSystem} we now associate 
a finite set of bivariate polynomials $\mathcal{H}(\sys)=\{H_1,\ldots,H_m\} \subset \C[u_0,u_1]$.
According to \cite{ritt1950differential}[page 6], for every $j \geq 2$ there exists a differential polynomial $R_{j-1}$ of order $j-1$ such that
\begin{equation}\label{eq-ritt}
G_1^{(j-1)}(y,\ldots,y^{(j)})=\frac{\partial\, G_1}{\partial u_1}(y,y') \cdot y^{(j)} + R_{j-1}(y,\ldots,y^{(j-1)}).
\end{equation}
Then, for $2 \leq j \leq m$, we introduce the rational functions
\begin{equation} \label{EQ:help3}
A_j(u_0,\ldots,u_{j-1})=\dfrac{-R_{j-1}(u_0,\ldots,u_{j-1})}{\frac{\partial\, G_1}{\partial u_1}(u_0,u_1)}.
\end{equation}
Now we recursively substitute in~\eqref{EQ:help3} the variables $u_2,\ldots,u_m$ by\\ $A_2(u_0,u_1),\ldots,A_m(u_0,\ldots,u_{m-1})$ to obtain the new rational functions $B_2,\ldots,B_m \in \C(u_0,u_1)$:
\begin{equation}\label{eq-B}
\left\{
\begin{array}{l}
B_2(u_0,u_1)=A_2(u_0,u_1) \\
B_3(u_0,u_1)=A_3(u_0,u_1,B_2(u_0,u_1))\\
\,\,\,\,\,\,\vdots \\
B_m(u_0,u_1)=A_m(u_0,u_1,B_2(u_0,u_1),\ldots,B_{m-1}(u_0,u_1)).
\end{array}
\right.
\end{equation}
Observe that the denominators of the rational functions $A_j$ are powers of the separant and depend only on $u_0$ and $u_1$.
Finally we set
\begin{equation}\label{eq-H}
\left\{
\begin{array}{l}
H_1(u_0,u_1)=\num(G_1(u_0,u_1)) \\
H_2(u_0,u_1)=\num(G_2(u_0,u_1,B_2(u_0,u_1))) \\
\,\,\,\vdots \\
H_m(u_0,u_1)=\num(G_m(u_0,u_1,B_2(u_0,u_1),\ldots,B_m(u_0,u_1))),
\end{array}
\right.
\end{equation}
where $\num(f)$ denotes the numerator of the rational function $f$.

In this situation, we introduce a new autonomous first order algebraic differential equation, namely
\begin{equation} \label{eq-gcd}
H(y,y')=\gcd(H_1,\ldots,H_m)(y,y')=0,
\end{equation}
and call it the \textit{reduced differential equation} (of $\sys$).
Note that by construction, $H$ divides $G_1$. 
Moreover, if $\sys$ contains only one single equation $G_1$, then the reduced differential equation of $\sys$ is equal to $G_1$.

\begin{theorem}\label{theorem:sys-gcd}
Let $\sys$ be as in~\eqref{EQ-TriangularizedSystem}.
Then $\sys$ and its reduced differential equation have the same non-constant formal Puiseux series solutions.
\end{theorem}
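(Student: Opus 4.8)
The plan is to show a bijection between the non-constant formal Puiseux series solutions of $\sys$ and those of $H(y,y')=0$, via the intermediate system $\mathcal{H}(\sys)=\{H_1,\ldots,H_m\}$. I would proceed in two implications. First, suppose $y(x)\in\PF$ is a non-constant solution of $\sys$. By Lemma~\ref{lem:TriangularizedSystem}'s argument, $(y,y',\ldots,y^{(m)})$ is a regular zero of $\sys$, so in particular $\frac{\partial G_1}{\partial u_1}(y(x),y'(x))\neq 0$ (this is where the square-free, no-univariate-factor hypothesis on $G_1$ is used, as remarked after the lemma). Differentiating $G_1(y(x),y'(x))=0$ repeatedly and using Ritt's identity~\eqref{eq-ritt}, one solves successively for $y''(x),\ldots,y^{(m)}(x)$ and obtains $y^{(j)}(x)=A_j(y(x),\ldots,y^{(j-1)}(x))$ for $2\le j\le m$; since the denominators are powers of $\frac{\partial G_1}{\partial u_1}(u_0,u_1)$ which do not vanish on $(y(x),y'(x))$, these evaluations make sense in $\PF$. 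Substituting recursively gives $y^{(j)}(x)=B_j(y(x),y'(x))$ for all $j$, and plugging this into $G_j=0$ shows $H_j(y(x),y'(x))=0$ for each $j$, hence $y(x)$ is a solution of the gcd, i.e.\ of the reduced equation $H(y,y')=0$.

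For the converse, suppose $y(x)\in\PF$ is a non-constant solution of $H(y,y')=0$. Since $H\mid G_1$, we get $G_1(y(x),y'(x))=0$, so the first equation of $\sys$ is satisfied. Again the hypotheses on $G_1$ guarantee $\frac{\partial G_1}{\partial u_1}(y(x),y'(x))\neq 0$; differentiating $G_1(y(x),y'(x))=0$ and invoking~\eqref{eq-ritt} forces $y^{(j)}(x)=A_j(y(x),\ldots,y^{(j-1)}(x))$ and therefore inductively $y^{(j)}(x)=B_j(y(x),y'(x))$ for $2\le j\le m$. Then, because $y(x)$ is a root of $H=\gcd(H_1,\ldots,H_m)$ and $H_j(u_0,u_1)=\num\big(G_j(u_0,u_1,B_2,\ldots,B_m)\big)$, one needs $H(y(x),y'(x))=0\Rightarrow H_j(y(x),y'(x))=0$; since $y(x)$ is transcendental over $\C$ and $H$ divides each $H_j$ (being their gcd in $\C[u_0,u_1]$), this gives $H_j(y(x),y'(x))=0$, and hence $G_j(y(x),y'(x),\ldots,y^{(j)}(x))=G_j(y(x),y'(x),B_2(y(x),y'(x)),\ldots)=0$ provided the denominators cleared in passing to numerators do not vanish at $(y(x),y'(x))$. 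So all equations of $\sys$ hold.

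The main obstacle is exactly the denominator issue in the $\Leftarrow$ direction: when we replace $G_j(u_0,u_1,B_2,\ldots,B_m)$ by its numerator $H_j$, we multiply through by a power of $\frac{\partial G_1}{\partial u_1}(u_0,u_1)$ (and possibly other separant factors accumulated in the $B_j$'s); a root of $H_j$ is a solution of the original rational equation only away from the zero set of that denominator. I would handle this by noting that the relevant denominators are all powers of $\frac{\partial G_1}{\partial u_1}(u_0,u_1)$, which by the square-free/no-univariate-factor assumption on $G_1$ has no common solution with $G_1$ in $\PF$ — and any non-constant solution $y(x)$ of $H$ is a solution of $G_1$, so $\frac{\partial G_1}{\partial u_1}(y(x),y'(x))\neq0$. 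Thus the denominators are units at the relevant Puiseux point and the numerator equations are equivalent to the rational ones there. A secondary point to check is that $y(x)$ being transcendental over $\C$ lets us pass freely between "$H(y(x),y'(x))=0$" and "$H_j(y(x),y'(x))=0$" using divisibility in $\C[u_0,u_1]$, and that constant solutions are correctly excluded on both sides by construction.
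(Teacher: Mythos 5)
Your proposal follows essentially the same route as the paper's proof: the same objects $A_j$, $B_j$, $H_j$, the same two implications, the nonvanishing of the separant $\frac{\partial G_1}{\partial u_1}$ along any non-constant solution of $G_1=0$ to justify the substitutions, and the same handling of the cleared denominators (powers of the separant) in the converse direction. The converse implication is complete as you present it.

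There is, however, one genuine gap, in the forward direction, at the very last step: from $H_j(y(x),y'(x))=0$ for all $j$ you conclude ``hence $y(x)$ is a solution of the gcd.'' This implication is not automatic: a common zero of $H_1,\ldots,H_m$ need not be a zero of $\gcd(H_1,\ldots,H_m)$ (take $H_1=u_0$, $H_2=u_1$ and the origin). The justification you gesture at in your closing paragraph --- ``divisibility in $\C[u_0,u_1]$,'' i.e.\ $H\mid H_j$ --- yields only the \emph{opposite} implication, that a zero of $H$ is a zero of every $H_j$; it is the right tool for the converse direction but not here. The paper closes this step with a B\'ezout identity: working in $\C(y)[y']$ and clearing denominators, there is a $Q\in\C[y]$ such that $Q\cdot\gcd(H_1,\ldots,H_m)$ is an algebraic combination of the $H_j$; since $y(x)$ is non-constant, $Q(y(x))\neq 0$, whence $H(y(x),y'(x))=0$. (Alternatively, one can note that the vanishing ideal of $(y(x),y'(x))$ in $\C[u_0,u_1]$ is a nonzero principal prime $(P)$ with $P\notin\C[y]$, because a non-constant Puiseux series is transcendental over $\C$; then $P$ divides every $H_j$, hence divides their gcd.) With either argument supplied, your proof is complete and coincides with the paper's.
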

\begin{proof}
Let $G_1$ be the square-free starting equation of $\sys$.
First of all we observe that $\gcd(G_1,\frac{\partial\, G_1}{\partial u_1})=1$ (see e.g. proof of Theorem 4.4. in~\cite{FalkensteinerSendra_2018}).
Therefore, if $y(x)\in \PF$ is non-constant and $G_1(y(x),y'(x))=0$, then $\frac{\partial\, G_1}{\partial u_1}(y(x),y'(x))\neq 0$.

Let $y(x)\in \PF$ be a non-constant formal Puiseux solution of $\sys$.
Then\\ $G_1(y(x),y'(x))=0$, and hence $G_1^{(j-1)}(y(x),\ldots,y^{(j)}(x))=0$.
Applying formula~\eqref{eq-ritt} and since $\frac{\partial\, G_1}{\partial u_1}(y(x),y'(x))\neq 0$, we get that
\[ A_j(y(x),\ldots,y^{(j-1)}(x))=y^{(j)}(x) \]
and we obtain that
\[ B_j(y(x),y'(x))=y^{(j)}(x). \]
Therefore,
\begin{align*} H_j(y(x),y'(x))&=\num(G_j(y(x),y'(x),B_2(y(x),y'(x)),\ldots,B_j(y(x),y'(x))) \\ &=G_j(y(x),\ldots,y^{(r)}(x))=0
\end{align*}
for every $2 \leq j \leq m$. 
By B\'{e}zout's identity, there exists $Q \in \C[y]$ such that the polynomial $Q \cdot \gcd(H_1,\ldots,H_m)$ is an algebraic combination of the $H_j$. 
Since the equation $Q(y)=0$ has only constant solutions, $y(x)$ is a solution of $H(y,y')=0$.

Conversely, let $y(x) \in \PF$ be a non-constant solution of the reduced differential equation $H(y,y')=\gcd(H_1,\ldots,H_m)(y,y')=0$.
Then, $G_1(y(x),y'(x))=H_1(y(x),y'(x))=0$ and as observed above, we obtain $\frac{\partial\, G_1}{\partial u_1}(y(x),y'(x))\neq 0$.
Thus, for every $1 \leq j \leq m$ the denominator of $G_j(u_0,u_1,B_2(u_0,u_1),\ldots,B_j(u_0,u_1))$ does not vanish at $(y(x),y'(x))$.
Taking into account \eqref{eq-H}, it follows that
$$G_j(y(x),y'(x),B_2(y(x),y'(x)),\ldots,B_j(y(x),y'(x)))=0.$$
Now, let us recursively show $B_j(y(x),y'(x))=y^{(j)}(x)$ for $2 \leq j \leq m$, which proves the theorem:
Since $B_2(y(x),y'(x))=A_2(y(x),y'(x))$ and by \eqref{EQ:help3}, $$R_1(y(x),y'(x))=-B_2(y(x),y'(x))\, \frac{\partial\, G_1}{\partial u_1}(y(x),y'(x)).$$
Then, by \eqref{eq-ritt},
\[ 0=G_1^{(1)}(y(x),y'(x),y''(x))=\frac{\partial\, G_1}{\partial u_1}(y(x),y'(x)) (y''(x)-B_2(y(x),y'(x))). \]
Using that $\frac{\partial G_1}{\partial u_1}(y(x),y'(x))\neq 0$, we obtain $B_2(y(x),y'(x))=y''(x)$.
Now, let us assume that $B_i(y(x),y'(x))=y^{(i)}(x)$ for $2 \leq i \leq j$.
By~\eqref{eq-B}, $B_{j+1}(y(x),y'(x))=A_{j+1}(y(x),\ldots,y^{(j)}(x))$.
Then, reasoning as above, we obtain
\begin{align*}
0&=G_1^{(j)}(y(x),\ldots,y^{(j)}(x))\\&=\frac{\partial\, G_1}{\partial u_1}(y(x),y'(x)) (y^{(j+1)}(x)-B_{j+1}(y(x),y'(x))),
\end{align*}
and hence, $B_{j+1}(y(x),y'(x))=y^{(j+1)}(x).$
\end{proof}

\begin{corollary} \label{cor:trivialReducedEquation}
Let $\sys$ be as in~\eqref{EQ-TriangularizedSystem} and let its reduced differential equation $H$ be a product of factors in $\C[y]$ and $\C[y']$.
Then $\sys$ has only linear formal Puisex series solutions, i.e. solutions of the form $\alpha\,x+\beta$ for some $\alpha,\beta \in \C$.
\end{corollary}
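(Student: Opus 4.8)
The plan is to deduce the statement for $\sys$ from the corresponding statement for its reduced differential equation. By Theorem~\ref{theorem:sys-gcd}, $\sys$ and $H(y,y')=0$ have exactly the same non-constant formal Puiseux series solutions; since every constant solution of $\sys$ already has the form $\alpha x+\beta$ (with $\alpha=0$), it suffices to show that every non-constant $y(x)\in\PF$ with $H(y(x),y'(x))=0$ has the form $y(x)=\alpha x+\beta$ for some $\alpha\in\C^{\ast}$ and $\beta\in\C$.

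First I would note that $H\neq 0$, since $H=\gcd(H_1,\ldots,H_m)$ and $H_1=\num(G_1)\neq 0$. By hypothesis we may then write $H(u_0,u_1)=c\cdot\prod_{k}L_k(u_0,u_1)$ with $c\in\C^{\ast}$ and each $L_k$ a nonconstant polynomial lying in $\C[u_0]$ or in $\C[u_1]$; if there are no such factors then $H$ is a nonzero constant, the equation $H=0$ has no solution at all, so every solution of $\sys$ is constant and we are done. Otherwise, let $y(x)\in\PF$ be a non-constant solution. Since $\PF$ is a field, hence an integral domain, $\prod_k L_k(y(x),y'(x))=0$ forces $L_{k_0}(y(x),y'(x))=0$ for some index $k_0$.

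It then remains to treat the two possibilities for $L_{k_0}$. If $L_{k_0}\in\C[u_0]$, then $L_{k_0}(y(x))=0$, so $y(x)$ is a root of a nonzero univariate polynomial over $\C$ and, $\C$ being algebraically closed, $y(x)\in\C$ --- contradicting non-constancy. Hence $L_{k_0}\in\C[u_1]$ and $L_{k_0}(y'(x))=0$, so the same argument gives $y'(x)=\alpha$ for some $\alpha\in\C$. Finally I would check that a Puiseux series with constant derivative is a polynomial of degree at most one: writing $y(x)=\sum_{i\ge i_0}a_i x^{i/n}$ and differentiating termwise, $y'(x)=\sum_{i\ge i_0}\tfrac{i}{n}\,a_i x^{i/n-1}$ can equal a constant only if $a_i=0$ for every $i\notin\{0,n\}$, whence $y(x)=\alpha x+\beta$ with $\alpha=a_n$ and $\beta=a_0$, and non-constancy forces $\alpha\neq 0$ (the identical computation, with $x^{i/n}$ replaced by $x^{-i/n}$, handles an expansion at infinity). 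This proves the corollary. The only step requiring some care is this last one --- the elementary but not entirely immediate fact that a constant derivative forces a linear Puiseux series; the rest is a direct consequence of Theorem~\ref{theorem:sys-gcd} together with the factored shape of $H$, so I do not anticipate any real obstacle.
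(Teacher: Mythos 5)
Your proposal is correct and follows essentially the same route as the paper: reduce to the equation $H(y,y')=0$ via Theorem~\ref{theorem:sys-gcd}, observe $H\neq 0$, and then note that factors in $\C[y]$ force constant solutions while factors in $\C[y']$ force linear ones. Your extra verification that a Puiseux series with constant derivative must be of degree at most one is a detail the paper leaves implicit, but it is a correct and welcome elaboration rather than a different argument.
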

\begin{proof}
From the construction of the reduced differential equations, and the assumption that $G_1,\ldots,G_m \neq 0$, we know that $H \neq 0$.
For every factor in $\C[y]$ of $H$, there are only constant solutions, and for every factor in $\C[y']$, there are only linear solutions of $H(y,y')=0$.
Then from Theorem~\ref{theorem:sys-gcd} the statement follows.
\end{proof}

Let $\widetilde{\sys}$ be as in~\eqref{EQ-AODESystem}.
Then, by Lemma~\ref{lem:TriangularizedSystem}, it can be written as the union of systems $\sys_1,\ldots,\sys_K$ of the form~\eqref{EQ-TriangularizedSystem}.
Let $H_1,\ldots,H_K$ be the reduced differential equations of these systems $\sys_1,\ldots,\sys_K$.
Then, as a consequence of Theorem~\ref{theorem:sys-gcd}, $\widetilde{\sys}$ and
\begin{equation} \label{eq-sysprod}
H(y,y')=\lcm(H_1,\ldots,H_K)(y,y')=0
\end{equation}
have the same non-constant formal Puiseux series solutions.
Equation~\eqref{eq-sysprod} is called a reduced differential equation of $\widetilde{\sys}$.
Now we are in the position to generalize the theoretical results obtained in~\cite{CanoFalkSendra}, in particular the two main Theorems in section 3, to systems of dimension one.

\begin{theorem} \label{theorem-convergence}
All formal Puiseux series solutions of the system of differential equations~\eqref{EQ-AODESystem}, expanded around a finite point or at infinity, are convergent.
\end{theorem}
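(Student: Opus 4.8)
The plan is to reduce the assertion entirely to the convergence result for first order autonomous algebraic ODEs established in~\cite{CanoFalkSendra}, via the reduced differential equation. First I would dispose of the trivial case: a constant formal Puiseux series is convergent, so only non-constant solutions need to be considered. By Lemma~\ref{lem:TriangularizedSystem}, $\widetilde{\sys}$ can be rewritten as a finite union of regular chains $\sys_1,\ldots,\sys_K$ of the form~\eqref{EQ-TriangularizedSystem} with the same non-constant formal Puiseux series solutions, and, by~\eqref{eq-sysprod} (a consequence of Theorem~\ref{theorem:sys-gcd}), these are exactly the non-constant formal Puiseux series solutions of the single first order autonomous equation $H(y,y')=\lcm(H_1,\ldots,H_K)(y,y')=0$, where $H_i$ denotes the reduced differential equation of $\sys_i$. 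Hence every formal Puiseux series solution of~\eqref{EQ-AODESystem}, expanded around a finite point or at infinity, is either constant or a solution of $H(y,y')=0$ expanded at the same point.

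The second step is to verify that $H$ satisfies the hypotheses under which~\cite{CanoFalkSendra} proves convergence, namely that $H\in\C[u_0,u_1]$ is non-constant, square-free, and has no factor lying in $\C[u_0]$ or in $\C[u_1]$. If $H$ were constant there would be nothing to prove, so assume $H$ non-constant. For each $i$, the starting equation of $\sys_i$ is, by construction, square-free and without factors in $\C[u_0]$ or $\C[u_1]$, and $H_i$ divides it; therefore $H_i$ is square-free and has no factor in $\C[u_0]$ or $\C[u_1]$. Since the irreducible factors of $H=\lcm(H_1,\ldots,H_K)$ are precisely the distinct irreducible factors occurring among $H_1,\ldots,H_K$, each with multiplicity one, $H$ itself is square-free and free of factors in $\C[u_0]$ or $\C[u_1]$. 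Thus $H(y,y')=0$ is an autonomous AODE of order one of exactly the type treated in~\cite{CanoFalkSendra}.

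Finally I would apply the convergence theorem of~\cite{CanoFalkSendra} to $H(y,y')=0$. Because~\eqref{EQ-AODESystem}, and hence $H$, is autonomous, a solution expanded around a finite point $x_0$ becomes, after the translation $x\mapsto x-x_0$ (which preserves convergence), a solution expanded around $0$; solutions expanded at infinity are covered by the same source. If such a solution takes the value $\infty$ at its expansion point, the substitution $y=1/\yy$ turns it into a solution taking a value in $\C$ that converges if and only if the original one does. In every case~\cite{CanoFalkSendra} gives convergence of the corresponding solution of $H(y,y')=0$, and the theorem follows.

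The main obstacle I anticipate is not conceptual but bookkeeping: one must check that the reduced differential equation $H$ genuinely inherits square-freeness and the absence of univariate factors from the starting equations of the regular chains, and that the expansion point, together with the normalization $y(x_0)\in\C$, is correctly tracked through Lemma~\ref{lem:TriangularizedSystem} and Theorem~\ref{theorem:sys-gcd}; the analytic substance is entirely imported from~\cite{CanoFalkSendra}.
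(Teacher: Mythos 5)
Your proposal is correct and follows essentially the same route as the paper: reduce to the reduced differential equation $H(y,y')=0$ via Theorem~\ref{theorem:sys-gcd} (and~\eqref{eq-sysprod}), then invoke the convergence theorem of~\cite{CanoFalkSendra}, with constant solutions handled trivially. The extra bookkeeping you supply (square-freeness and absence of univariate factors of $H$, tracking the expansion point and the value $\infty$) is consistent with the paper's normalizations and only makes explicit what the paper leaves implicit.
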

\begin{proof}
By Theorem~\ref{theorem:sys-gcd}, the system~\eqref{EQ-AODESystem} and its reduced differential equation have the same non-constant solutions. 
By~\cite{CanoFalkSendra}[Theorem 10] all formal Puiseux series solutions of $H=0$ are convergent. 
Since constant solutions are convergent as well, the statement follows.
\end{proof}

\begin{theorem} \label{theorem-existence}
Let $\widetilde{\sys}$, as in~\eqref{EQ-AODESystem}, have a non-linear solution.
Then for any point $(x_0,y_0) \in \C^2$ there exists an analytic solution $y(x)$ of~\eqref{EQ-AODESystem} such that $y(x_0)=y_0$.
\end{theorem}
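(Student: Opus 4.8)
\emph{Proposal.} The plan is to transfer, just as in the proof of Theorem~\ref{theorem-convergence}, the corresponding existence result of \cite{CanoFalkSendra} to the system through its reduced differential equation. By Lemma~\ref{lem:TriangularizedSystem}, I would first write $\widetilde{\sys}$ as a union of regular chains $\sys_1,\dots,\sys_K$ of the form~\eqref{EQ-TriangularizedSystem}. A non-linear solution of $\widetilde{\sys}$ is in particular non-constant, hence a solution of some $\sys_i$, and by Theorem~\ref{theorem:sys-gcd} a non-linear solution of the reduced differential equation $H$ of $\sys_i$. Corollary~\ref{cor:trivialReducedEquation} then shows that $H$ is not a product of factors in $\C[y]$ and $\C[y']$; moreover, since $H$ divides the first equation $G_1$ of $\sys_i$ and $G_1$ is square-free with no factor in $\C[u_0]$ or $\C[u_1]$, the same holds for $H$. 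Thus $H=0$ is a non-trivial first order autonomous AODE of the type treated in \cite{CanoFalkSendra}, and the existence theorem there provides, for the given $(x_0,y_0)\in\C^2$, a convergent Puiseux series solution $y(x)$ of $H(y,y')=0$ with $y(x_0)=y_0$.

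If $y(x)$ is non-constant, then by Theorem~\ref{theorem:sys-gcd} it is a solution of $\sys_i$, hence of $\widetilde{\sys}$, and since a convergent Puiseux series defines an analytic function we are done. It therefore remains to treat the case in which the solution produced is the constant $y(x)\equiv y_0$, which can occur only for the finitely many $y_0$ with $H(y_0,0)=0$. Here one must show that $(y_0,0,\dots,0)$ lies in $\mathbb{V}_{\C}(\sys_i)\setminus\mathbb{V}_{\C}(\pinit(\sys_i))$, so that $y\equiv y_0$ is a (trivially analytic) solution of $\widetilde{\sys}$. The crucial observation is that the differential polynomials $R_{j-1}$ of~\eqref{eq-ritt} vanish at the jet $(y_0,0,0,\dots)$ of a constant function, since all $x$-derivatives of a constant are zero; when the separant $\frac{\partial G_1}{\partial u_1}$ does not vanish at $(y_0,0)$ this forces the $A_j$, and then the $B_j$ of~\eqref{eq-B}, to specialize to $0$ at $(y_0,0)$, and clearing the power-of-separant denominators in~\eqref{eq-H}, together with $H\mid G_1$ and $H\mid H_k$, yields $G_k(y_0,0,\dots,0)=0$ for all $k$.

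I expect the remaining degenerate situation to be the main obstacle: when the separant vanishes at $(y_0,0)$ the $B_j$ need not be regular there, so the specialization argument has to be carried out along the curve $G_1=0$ (using $G_1(y_0,0)=0$ to cancel the offending factors), and one must separately rule out that $(y_0,0,\dots,0)$ sits in $\mathbb{V}_{\C}(\pinit(\sys_i))$ while failing to be a solution of $\widetilde{\sys}$. A natural way to deal with this is to notice that at such a point --- $(y_0,0)$ being one of the finitely many common zeros of $G_1$ and its separant, or a point where some initial of $\sys_i$ vanishes --- the curve $H=0$ typically carries a place over $u_0=y_0$ (a ramified branch, or one centred at $(y_0,\infty)$) whose associated solution is a non-constant convergent Puiseux series through $(x_0,y_0)$, which sends us back to the first case. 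Apart from this bookkeeping, the argument is a direct transcription of the one-equation theory of \cite{CanoFalkSendra} through Theorem~\ref{theorem:sys-gcd}.
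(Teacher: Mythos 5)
Your first paragraph is, in essence, the paper's entire proof: the paper argues in two sentences that, by Corollary~\ref{cor:trivialReducedEquation}, the reduced differential equation of $\widetilde{\sys}$ has an irreducible factor depending on both $y$ and $y'$, and then invokes Theorem 11 of \cite{CanoFalkSendra}. On that main line you and the paper coincide exactly (the paper does not even route through a specific $\sys_i$; it applies the corollary to $\widetilde{\sys}$ directly).

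The rest of your proposal concerns a point the paper passes over in silence: the existence theorem of \cite{CanoFalkSendra} may return the \emph{constant} solution $y\equiv y_0$ (for instance for $H=y'-y$ at $y_0=0$ no non-constant Puiseux series solution passes through $(x_0,0)$), whereas Theorem~\ref{theorem:sys-gcd} only identifies the \emph{non-constant} solutions of $H=0$ with those of the system. You are right that this leaves something to verify, and your observation that the $R_{j-1}$, hence the $A_j$ and $B_j$, specialize to $0$ at $(y_0,0,\ldots,0)$ whenever the separant does not vanish there is correct and settles that sub-case. However, your treatment of the degenerate situation --- separant vanishing at $(y_0,0)$, or $(y_0,0,\ldots,0)\in\mathbb{V}_{\C}(\pinit(\sys_i))$ --- is explicitly conjectural (``typically'', ``I expect''), so as written the proposal does not close the argument at precisely the point where it tries to be more careful than the paper. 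One way to finish: if no place of the curve $H=0$ lying over $y_0$ yields a non-constant solution through $(x_0,y_0)$, then every such place is centered at $(y_0,0)$ and, in a local parametrization $(a(t),b(t))$, satisfies $\ord(a-y_0)\le\ord(b)$; the identities $B_1=b$ and $B_j(a,b)=\bigl(B_{j-1}(a,b)\bigr)'\,b/a'$ then give $\ord\bigl(B_j(a(t),b(t))\bigr)=j\bigl(\ord(b)-\ord(a-y_0)\bigr)+\ord(a-y_0)>0$, so $(y_0,0,\ldots,0)$ lies in the Zariski closure of the jet curve of a non-constant solution of $\widetilde{\sys}$ and is therefore a common zero of the original $F_j$, making $y\equiv y_0$ a solution of the system. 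Be aware, though, that none of this appears in the paper, whose proof consists solely of your first paragraph.
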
\label{theorem-degree-bound-alg-sol}
\begin{proof}
By Corollary~\ref{cor:trivialReducedEquation}, the reduced differential equation of $\widetilde{\sys}$ has at least one irreducible factor depending on $y$ and $y'$.
Then by~\cite{CanoFalkSendra}[Theorem 11] the statement follows.
\end{proof}

\begin{theorem}
Let $y(x)$ be a non-constant formal Puiseux series solution of $\widetilde{\sys}$ as in \eqref{EQ-AODESystem} algebraic over $\C(x)$ and $H$ the reduced differential equation of $\widetilde{\sys}$. 
Then the minimal polynomial $G(x,Y)$ of $y(x)$ fulfills the degree bounds 
\begin{equation} \label{deg-bound}
\deg_{x}(G)\leq\deg_{y'}(H),\quad\text{and}\quad \deg_{Y}(G)\leq \deg_{y}(H)+\deg_{y'}(H).
\end{equation}
In particular if $y(x)$ is a rational solution of $\widetilde{\sys}$,
then its degree, the maximum of the degree of the numerator and denominator, 
is less than or equal to $\deg_{y'}(H)$.
\end{theorem}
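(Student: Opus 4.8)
The plan is to reduce the statement, via the triangularization/reduction already developed in this section, to the degree bounds for algebraic solutions of a single first-order autonomous AODE, and then to treat the rational case as an immediate corollary.

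First I would invoke Theorem~\ref{theorem:sys-gcd} together with~\eqref{eq-sysprod}: since $y(x)$ is a non-constant formal Puiseux series solution of $\widetilde{\sys}$, it is a non-constant solution of the reduced differential equation $H(y,y')=0$. Because $H(y(x),y'(x))=0$ and $H$ factors over $\C$ into irreducibles, there is an irreducible factor $\widetilde H\in\C[y,y']$ of $H$ with $\widetilde H(y(x),y'(x))=0$, and $\deg_y(\widetilde H)\le\deg_y(H)$, $\deg_{y'}(\widetilde H)\le\deg_{y'}(H)$; hence it suffices to prove~\eqref{deg-bound} with $\widetilde H$ in place of $H$. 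Moreover $\widetilde H$ must depend on $y'$, for otherwise $\widetilde H\in\C[y]\setminus\C$ would force $y(x)$ constant, contrary to hypothesis. If $\widetilde H\in\C[y']$ then $\widetilde H=y'-\alpha$ for some $\alpha\in\C$, so $y(x)=\alpha x+\beta$ is linear (the degenerate case of Corollary~\ref{cor:trivialReducedEquation}), $G=Y-\alpha x-\beta$, and~\eqref{deg-bound} holds trivially. So we may assume $\widetilde H$ is irreducible and genuinely depends on both $y$ and $y'$.

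Next I would apply the degree bounds for algebraic solutions of irreducible first-order autonomous algebraic ODEs from~\cite{aroca2005algebraic} to the equation $\widetilde H(y,y')=0$, which has $y(x)$ as a non-constant algebraic solution with minimal polynomial $G(x,Y)$: this yields $\deg_x(G)\le\deg_{y'}(\widetilde H)$ and $\deg_Y(G)\le\deg_y(\widetilde H)+\deg_{y'}(\widetilde H)$, and combining with the factor inequalities above gives exactly~\eqref{deg-bound}. The mechanism behind the cited bound is that for a non-constant algebraic solution one has $\C(x,y(x))=\C(y(x),y'(x))$, which is the function field of the (irreducible) curve defined by $\widetilde H$; then $\deg_Y(G)$ is the degree of $x$, and $\deg_x(G)$ the degree of $y$, as rational functions on that curve, both estimated from the identity $dx=dy/y'$ on the curve via a pole count / Riemann--Hurwitz argument. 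I would cite this rather than reprove it.

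Finally, for the rational case, if $y(x)=a(x)/b(x)$ with $a,b\in\C[x]$ coprime, then $G(x,Y)=b(x)\,Y-a(x)$ is, up to a constant, the minimal polynomial of $y(x)$ over $\C(x)$, so $\deg_Y(G)=1$ and $\deg_x(G)=\max\{\deg a,\deg b\}$, which is the degree of $y(x)$; the bound $\deg_x(G)\le\deg_{y'}(H)$ then gives the claim. I expect the only real obstacle here to be bookkeeping rather than new mathematics: the substantive content is imported from~\cite{aroca2005algebraic}, and what needs care is passing from the possibly reducible reduced equation $H$ to an irreducible factor $\widetilde H$ actually satisfied by $y(x)$, checking that $\widetilde H$ involves $y'$ so that the hypotheses of the cited theorem are met, and isolating the degenerate linear case.
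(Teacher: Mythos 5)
Your proposal is correct and follows essentially the same route as the paper: reduce via Theorem~\ref{theorem:sys-gcd} to the first-order autonomous reduced equation $H(y,y')=0$ and then invoke the degree bounds of Theorems 3.4 and 3.8 in~\cite{aroca2005algebraic}. You are in fact slightly more careful than the paper's two-line proof, since you explicitly pass to an irreducible factor $\widetilde H$ of $H$ vanishing on $(y(x),y'(x))$ (needed because the cited theorems assume irreducibility) and isolate the degenerate case $\widetilde H\in\C[y']$; these are details the paper leaves implicit, not a different approach.
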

\begin{proof}
By Theorem~\ref{theorem:sys-gcd}, $y(x)$ is a solution of the autonomous first order differential equation $H(y,y')=0$. 
Then by Theorems 3.4 and 3.8 in~\cite{aroca2005algebraic} the degree bounds~\eqref{deg-bound} follow.
\end{proof}

\section{Algorithms and Examples} \label{sec-algo}

In this section we outline an algorithm based on the results in Section~\ref{sec-systems} 
to derive the reduced differential equation from a given system ~\eqref{EQ-AODESystem}. 
By \textsf{Triangularize}$(\widetilde{\sys})$ we refer to the computation of a regular chain decomposition of a given system $\widetilde{\sys}$ as in Theorem~\ref{thm-union}.
Then, using the algorithms in~\cite{CanoFalkSendra}, 
it is possible to algorithmically describe all formal Puiseux series solutions of the given system. 
We illustrate this in the subsequent examples.

\begin{algorithm}[H]
\caption{ReduceSystem} \label{ALG:Reduce}
\begin{algorithmic}
	\REQUIRE A finite system of autonomous algebraic ordinary differential equations $\widetilde{\sys} \subset \C[y,\ldots,y^{(m)}]$ which associated algebraic set is of dimension one.
	\ENSURE The reduced differential equation of $\widetilde{\sys}$.
	\STATE Set $\mathfrak{S}=$\textsf{Triangularize}$(\widetilde{\sys})$ and $H=1$.
	\FOR{every $\sys \in \mathfrak{S}$ of dimension one let $G_1$ be the polynomial in $\C[u_0,u_1]$ associated to the equation of $\sys$ depending on $y,y'$}
	\STATE Take the square-free part of $G_1(u_0,u_1)$ and divide by its factors in $\C[u_0]$ and $\C[u_1]$; call it $G_1^*(u_0,u_1)$.
	\STATE Replace in $\sys$ the equation $G_1(y,y')=0$ by $G_1^*(y,y')$; call it $\sys^*$.
	\STATE Compute the associated set $\mathcal{H}(\sys^*)=\{H_1,\ldots,H_m\}$ as in~\eqref{eq-ritt}-\eqref{eq-H} and set $H=\lcm(H,\gcd(H_1,H_2,\ldots,H_m))$.
	\ENDFOR \\
	\RETURN $H$.
\end{algorithmic}
\end{algorithm}

\begin{example} \label{ex-1-part1}
Let us consider the system of differential equations given by
\begin{equation} \label{ex-1-General}
	\widetilde{\sys}=
	\left\{ \begin{array}{ll}
	yy'y''+y'^3-yy''-y'^2=0 \\
	yy'-1-y'^2-yy''=0.
	\end{array} \right.
\end{equation}
The system $\widetilde{\sys}$ can be decomposed into the system of regular chains 
\begin{equation*}
	\sys_1=
	\left\{ \begin{array}{ll}
	G_1=yy'-1=0 \\
	G_2=y'^2+yy''=0
	\end{array} \right.
\quad \text{ and } \quad 
	\sys_2=
	\left\{ \begin{array}{ll}
	y'-1=0 \\
	2-y+yy''=0
	\end{array} \right.
\end{equation*}
For the system $\sys_1$ the starting equation $G_1$ is already square-free with no factor in $\C[y]$ or $\C[y']$ and we set $H_1=G_1$.
By computing $\frac{d\,G_1}{dx}(y,y',y'')$ and setting it to zero we obtain $y''=\frac{-y'^2}{y}.$
Hence, $$H_2(y,y')=\num\left(G_2\left(y,y',\frac{-y'^2}{y}\right)\right)=H_1(y,y').$$
Then the reduced differential equation of $\sys$ is $$H(y,y')=\gcd(H_1,H_2)(y,y')=yy'-1=0.$$

For the system $\sys_2$ we obtain $H_1(y,y')=y'-1$ and $H_2(y,y')=2-y'$, which are coprime. 
Hence, the reduced differential equation of $\sys_2$ is equal to one and therefore, the reduced differential equation of $\widetilde{\sys}$ is $H(y,y')=yy'-1$.

We remark that by using the \texttt{RosenfeldGroebner}-command from \texttt{Maple}, 
which uses regular differential chains as described in~\cite{CluzeauHubert2003}, 
the reduced differential equation $H(y,y')=0$ of $\widetilde{\sys}$ can be found as well.
\end{example}

The next algorithm describes all formal Puiseux series solutions of a system $\widetilde{\sys}$ which associated algebraic set is of dimension one. 
We use algorithm \textsf{PuiseuxSolve} described in~\cite{CanoFalkSendra} whose input is an autonomous ordinary differential equation of order one and algorithm \textsf{ReduceSystem} from above. 
The output is a finite set of truncations in one-to-one correspondence to all Puiseux series solutions.

\begin{algorithm}[H]
\caption{PuiseuxSolveSystem} \label{ALG:PuiseuxSolveSystem}
\begin{algorithmic}
	\REQUIRE A finite system of autonomous algebraic ordinary differential equations $\widetilde{\sys} \subset \C[y,\ldots,y^{(m)}]$ which associated algebraic set is of dimension one.
	\ENSURE A set $\Sigma$ of all solution truncations of $\widetilde{\sys}$ such that the truncation can be uniquely extended. 
	$\Sigma$ has non constant solutions if and only if $\Sigma\neq \emptyset$.
	\STATE Set $\Sigma=\emptyset$ and $H(y,y')=$\textsf{ReduceSystem}$(\widetilde{\sys})$.
        \STATE Let $H^{*}(y,y')$ be the polynomial obtained after factoring out factors in $\C[y]$ and $\C[y']$ 
        and taking the square-free part of $H(y,y')$.
        \STATE If $H^{*}$ is not a constant, then set $\Sigma=$\textsf{PuiseuxSolve}$(H^{*})$.
        \STATE Add to $\Sigma$ the non constant linear solutions of $\widetilde{\sys}$. 
        This can be done by making the ansatz $y(x)=\alpha\,x+\beta$ with unknown $\alpha$ and $\beta$ 
        and plug it into the equations and solving the algebraic system obtained in $\alpha$ and $\beta$.   
	\RETURN $\Sigma$.
\end{algorithmic}
\end{algorithm}

We can devise a similar algorithm to compute a set of truncations of
Puiseux solutions expanded at the infinity point replacing in the
above algorithm the algorithm \textsf{PuiseuxSolve} by the algorihtm 
\textsf{PuiseuxSolveInfinity} from~\cite{CanoFalkSendra}. 
However, the uniqueness of the
extension can in general not be ensured.

\para

The next algorithm decides if a system $\widetilde{\sys}$ as 
\eqref{EQ-AODESystem} has an
algebraic solution and compute some of them in the affirmative case. 
Its correctness is based on the proof of 
Theorem~\ref{theorem-degree-bound-alg-sol} where it is shown 
that the non constant algebraic solutions of the system
$\widetilde{\sys}$ are the non constant algebraic solutions of the reduced equation
of $\widetilde{\sys}$. In section 4 of~\cite{aroca2005algebraic} there
is a description of an algorithm that decides if an autonomous
differential equation $H_i(y,y')=0$ has algebraic solutions and
compute them in the affirmative case. This algorithm needs that the
polynomials $H_i(y,y')$ are irreducible. Hence, the next algorithm is
not factor free. 
Let us call the output of this algorithm \textsf{AlgSol}$(H_i)$.

\begin{algorithm}[H]
\caption{AlgSolutionSystem} \label{ALG:AlgSolutionSystem}
\begin{algorithmic}
	\REQUIRE A finite system of autonomous algebraic ordinary
        differential equations 
  $\widetilde{\sys} \subset \C[y,\ldots,y^{(m)}]$ which associated algebraic set is of dimension one.
	\ENSURE A set $\Sigma$ of algebraic solutions of
        $\widetilde{\sys}$ or the emptyset such that system
        $\widetilde{\sys}$ has an algebraic solution if and only if
        $\Sigma\neq\emptyset$.
	\STATE Set $\Sigma=\emptyset$ and  
        $H(y,y')=$\textsf{ReduceSystem}$(\widetilde{\sys})$.
        \STATE Let $H^{*}(y,y')$ be the polynomial obtained after
        factoring out factors in $\C[y]$ and $\C[y']$.
        \FOR{ each irreducible factor $H_i(y,y')$ of $H^{*}(y,y')$ }
        \STATE Add to $\Sigma$ the output of the above mentioned algorithm \textsf{AlgSol}$(H_i)$.
        \ENDFOR
        \STATE Add to $\Sigma$ the non constant linear solutions of
        $\widetilde{\sys}$.   
	\RETURN $\Sigma$.
\end{algorithmic}
\end{algorithm}


\begin{example}\label{ex-1-part2}
Let us consider system \eqref{ex-1-General} of Example \ref{ex-1-part1}. 
By Theorem \ref{theorem:sys-gcd}, the solutions are those of the reduced differential equation $$H(y,y')=y\,y'-1=0.$$
We obtain all the formal Puiseux series solutions, expanded around zero, by the one-parameter family of solutions
\begin{align*}
y(x)=y_0+\frac{x}{y_0}-\frac{x^2}{2y_0^3}+\frac{x^3}{2y_0^5}+\mathcal{O}(x^4)
\end{align*}
with $y_0 \in \C \setminus \{0\}$, and the particular solutions 
$$y(x)= \pm \sqrt{2}\, x^{1/2}.$$
There is no formal Puiseux series solution with the initial value $y(0)=\infty$.
The only linear solutions of $\widetilde{\sys}$ are $y(x)=\pm 1$.
The possible algebraic solutions $y(x)$ have a minimal polynomial
$G(x,Y)$ with degree bound of $\deg_xG\leq \deg_{y'}(H)=1$ and
$\deg_YG\leq \deg_{y'}(H)+\deg_{y}(H)=2$.
They are given by the zeros of
$$G_{y_0}(x,Y)=Y^2-2\,(x+\frac{y_0^2}{2}).$$
\end{example}

The assumption on the dimension of the given system is crucial in our work. 
Otherwise for instance Theorem~\ref{theorem-convergence} does not hold anymore as the following example shows.

\begin{example} \label{ex-non convergent}
Let us consider $F(x,y,y')=x^2\,y'-y+x$. 
The non-convergent formal power series $$y(x)=\sum_{j \geq 0} j!\,x^{j+1}$$ is a solution of $F=0$. 
Hence, $y(x)$ is also a zero of $\frac{d\,F}{dx}(x,y,y',y'')$ and consequently, of the resultant of $F$ and $\frac{d\,F}{dx}$ with respect to $x$, namely $$\Res_x\left(F,\frac{d\,F}{dx}\right)=y''+y''^2y^2-y''y'+4y''y'y-y'^2-2y''y'^2y-4y'^3y+y'^4.$$
Note that $\{ \Res_x\left(F,\frac{d\,F}{dx}\right)(y,y',y'')=0 \}$ defines a system of autonomous ordinary algebraic differential equations of algebro-geometric dimension two.
\end{example}

\appendix

\bibliographystyle{authordate1}

\end{document}